\definecolor{dgreen}{rgb}{0,.8,.3}
\definecolor{lblue}{rgb}{.2,.3,.7}
\newtheorem{theorem}{Theorem}
\newtheorem{lemma}{Lemma}
\newtheorem{remark}{Remark}
\numberwithin{equation}{section}
\numberwithin{lemma}{section}
\numberwithin{theorem}{section}
\newcommand{\beq}{\begin{equation}}
\newcommand{\eeq}{\end{equation}}
\newcommand{\beqa}{\begin{eqnarray}}
\newcommand{\eeqa}{\end{eqnarray}}
\newcommand{\beqas}{\begin{eqnarray*}}
\newcommand{\eeqas}{\end{eqnarray*}}
\newcommand{\ba}{\begin{array}}
\newcommand{\ea}{\end{array}}
\newcommand{\bi}{\begin{itemize}}
\newcommand{\ei}{\end{itemize}}
\newcommand{\gap}{\hspace*{2em}}
\def\Arg{{\rm Arg}}
\def\cFr{{\Delta^u_r}}
\def\cK{{\cal K}}
\def\cN{{\cal N}}
\def\cO{{\cal O}}
\def\cX{{\cal X}}
\def\tx{{\tilde x}}
\title{An Efficient Optimization Approach for a Cardinality-Constrained Index Tracking Problem}
\author{
 Fengmin Xu
 \thanks{School of Mathematics and Statistics, Xi'an Jiaotong University, Xi'an, 710049, China
(Email: {\tt fengminxu@mail.xjtu.edu.cn.}~This author was supported by China NSFC projects (No.11101325)
and (No.71371152).}
 \and
 Zhaosong Lu
 \thanks{Department of Mathematics, Simon Fraser University, Burnaby, BC, V5A 1S6, Canada
(Email: {\tt zhaosong@sfu.ca}). This author was supported in part by NSERC Discovery Grant.}
\and
Zongben Xu
\thanks{School of Mathematics and Statistics, Xi'an Jiaotong University, Xi'an, 710049, China
(Email: {\tt zbxu@mail.xjtu.edu.cn.})~This author was supported by National 973 Program of China
(No.2007CB311002) and China NSFC projects (No.70531030).}
}
\date{October 20, 2014 (Revised: March 16, 2015)}
\begin{document}

\maketitle

\begin{abstract}

In the practical business environment, portfolio managers often face business-driven requirements
that limit the number of constituents in their tracking portfolio. A natural index tracking model
is thus to minimize a tracking error measure while enforcing an upper bound on the
number of assets in the portfolio. In this paper we consider such a cardinality-constrained index
tracking model. In particular, we propose an efficient nonmonotone projected gradient (NPG)
method for solving this problem. At each iteration, this method usually solves several projected
gradient subproblems. We show that each subproblem has a closed-form solution, which can be
computed in linear time.  Under some suitable assumptions, we establish that any accumulation
point of the sequence generated by the NPG method  is a local minimizer of the cardinality-constrained
index tracking problem.  We also conduct empirical tests to compare our method with the hybrid
evolutionary algorithm \cite{Evolutionary} and the hybrid half thresholding algorithm \cite{L1/2}
for index tracking. The computational results demonstrate that our approach generally produces
sparse portfolios with smaller out-of-sample tracking error and higher consistency between in-sample
and out-of-sample tracking errors. Moreover, our method outperforms the other two approaches in
terms of speed.

\vskip14pt
 \noindent {\bf Keywords}: Index tracking, cardinality constraint, nonmonotone projected gradient method.
\end{abstract}

\section{Introduction}
Index tracking aims at replicating the performance and risk profile of a given market index, and
constructs a tracking portfolio such that the performance of the portfolio is as close as possible to that
of the market index. Index tracking problem has received a great deal of attention in the literature (see,
for example, \cite{Alexander,Ammann,Beasley2003,Brodie,Korn,Beasley,DeMiguel,Fan,Gilli,lobo,Roll,Rudolf,
Tabata,Evolutionary,GaoLi}). An obvious approach is by full replication of the index. It, however, can cause high
administrative and transaction costs. Also, in the practical business environment, portfolio managers
often face business-driven requirements that limit the number of constituents in their tracking
portfolio. Therefore, index tracking can reduce transaction costs and avoid detaining small and illiquid
assets for the index with a large number of constituents.

In this paper we consider a natural model for index tracking, which minimizes a
quadratic tracking error while enforcing an upper bound on the number of assets in the portfolio.
When short selling is not allowed, this model can be formulated mathematically as
\begin{equation} \label{index-track}
\min\limits_{x\in\cFr} TE(x) := \|y - Rx\|^2/T.
\end{equation}
Here, $x \in \Re^n$ is the weight vector of $n$ index constituents; $y\in\Re^T$ is a sample vector of
portfolio returns over a period of length $T$; $R\in\Re^{T \times n}$ consists of the sample returns of
 index constituents over the same period,
\beq \label{cFr}
\cFr := \left\{x\in\Re^n: \ba{l}
\sum_{i = 1}^n {x_i }  = 1, \ \|x\|_0  \le r \\ [4pt]
0 \le x_i  \le u,  \  i=1,\ldots, n
\ea \right\},
\eeq
$\|x\|_0$ denotes the number of nonzero entries of $x$; and $u \in [1/r,1]$ is an upper bound on
the weight of each index constituent. The sum of error squares is used here
to measure the tracking error between the returns of the index and the returns of a portfolio. We
shall mention that another possible tracking error measure is the weighted sum of error squares. Recently,
Gao and Li \cite{GaoLi} studied a related but different cardinality constrained portfolio selection model,
which minimizes the variance of the portfolio subject to a given expected return and a cardinality
restriction on the assets. They developed some efficient lower bounding schemes and proposed a
branch-and-bound algorithm to solve the model.

Index tracking problem \eqref{index-track} involves a cardinality constraint and is generally
NP-hard. It is thus highly challenging to find a global optimal solution to this problem.
Recently,  Fastrich et al.\ \cite{cardinality} studied a relaxation of \eqref{index-track} by
replacing the cardinality constraint in \eqref{index-track} by imposing an upper bound on the
$l_q$-norm ($0<q<1$) \cite{cxy} of the vector of portfolio weights. Xu et al. \cite{L1/2} considered
a special case of this relaxation with $q=1/2$ and proposed a hybrid half thresholding algorithm for
solving this $l_{1/2}$ regularized index tracking model. Lately, Chen et al.\ \cite{Chen14}
proposed a new relaxation of problem \eqref{index-track}, which minimizes the $l_q$-norm regularized
tracking error. They also proposed an interior point method to solve the model. On the other hand, a
local optimal solution of \eqref{index-track} can be found by the penalty decomposition method and the
iterative hard thresholding method that were proposed in \cite{PD,IHT}, respectively. However, they are
generic methods for a more general class of cardinality-constrained optimization problems. When
applied to problem \eqref{index-track}, these methods may not be efficient since they cannot
exploit the specific structure of the feasible region of problem \eqref{index-track}.

Nonmonotone projected gradient (NPG) methods have widely been studied in the literature, which
incorporate the nonmonotone line search technique proposed in \cite{GrLaLu86} into projected gradient
methods. For example, Birgin et al.\ \cite{E.G.Birgin} studied the convergence of an NPG method for minimizing
a smooth function over a closed convex set. Dai and Fletcher \cite{DaFl05} studied a NPG method for solving
a box-constrained quadratic programming in which Barzilai and Borwein's scheme \cite{BB} is used to choose
initial stepsize.  Recently, Francisco and Baz\'an \cite{ncNPG} proposed an NPG method for minimizing
a smooth objective over a general nonconvex set and showed that it converges a generalized stationary point that
is a fixed point of a certain proximal mapping. It is known that NPG methods generally outperform the classical
(monotone) projected gradient methods in terms of speed and/or solution quality (see, for example, \cite{E.G.Birgin,DaFl05,AuSiTe07,TaZh12}).  In this paper, we
propose a simple NPG method for solving problem \eqref{index-track}. At each iteration, our method usually
solves several projected gradient subproblems. By exploiting the specific structure of the feasible region of
problem \eqref{index-track}, we show that each projected gradient subproblem has a closed-form solution,
which can be computed in {\it linear} time. Moreover, we show that any accumulation point of the sequence
generated by our method is an optimal solution of a related convex optimization problem. Under some suitable
assumption, we further establish that such an accumulation point is a local minimizer of problem
\eqref{index-track}. We also conduct empirical tests to compare our method with the other two approaches
proposed in \cite{Evolutionary,L1/2} for index tracking. The computational results demonstrate that our approach
generally produces sparse portfolios with smaller out-of-sample tracking error and higher consistency between
in-sample and out-of-sample tracking errors. Moreover, our method outperforms the other two approaches in
terms of speed.

The rest of the paper is organized as follows. In section \ref{method} we propose a nonmonotone
projected gradient method for solving a class of optimization problems that include problem
\eqref{index-track} as a special case and establish its convergence. In section \ref{result} we conduct
empirical tests to compare our method with the other two existing approaches for index tracking.  We
present some concluding remarks in section \ref{conclude}.


\section{Nonmonotone projected gradient method}
\label{method}

In this section we propose a nonmonotone projected gradient (NPG) method for solving the problem
\beq \label{sparse-prob}
\min\limits_{x\in\cFr} f(x),
\eeq
 where $\cFr$ is defined in \eqref{cFr} and $f:\Re^n \to \Re$ is Lipschitz continuously differentiable, that is,  there is a constant $L_f > 0$ such that
\beq \label{lipschitz}
\left\| {\nabla f(x) - \nabla f(y)} \right\| \le L_f\left\| {x - y} \right\| \quad \forall x, y \in \Re^n.
\eeq
Throughout this paper, $\|\cdot\|$ denotes the standard Euclidean norm. It is clear to see that
problem \eqref{sparse-prob} includes \eqref{index-track} as a special case. Therefore, the NPG
method proposed below can be suitably applied to solve problem
\eqref{index-track}.

\gap

\noindent
{\bf Nonmonotone projected gradient (NPG) method for \eqref{sparse-prob}}  \\ [5pt]
Let $0< L_{\min} < L_{\max}$, $\tau>1$, $c>0$, integer $M \ge 0$ be given. Choose an
arbitrary $x^0 \in \cFr$ and set $k=0$.
\begin{itemize}
\item[1)] Choose $L^0_k \in [L_{\min}, L_{\max}]$ arbitrarily. Set $L_k = L^0_k$.
\bi
\item[1a)] Solve the subproblem
\beq \label{subprob}
x^{k+1} \in \Arg\min\limits_{x \in \cFr} \left\{\nabla f(x^k)^T (x-x^k) + \frac{L_k}{2} \|x-x^k\|^2\right\}
\eeq
\item[1b)] If
\beq \label{descent}
f(x^{k+1}) \le \max\limits_{[k-M]_+ \le i \le k} f(x^i) - \frac{c}{2} \|x^{k+1}-x^k\|^2
\eeq
is satisfied, then go to step 2).
\item[1c)] Set $L_k \leftarrow \tau L_k$ and go to step 1a).
\ei
\item[2)]
Set $k \leftarrow k+1$ and go to step 1).
\end{itemize}
\noindent
{\bf end}

\gap

\begin{remark}
\begin{itemize}
\item[(i)]
When $M=0$, the sequence $\{f(x^k)\}$ is monotonically decreasing. Otherwise, it may increase at some  iterations and thus the above method is generally a nonmonotone method.
\item[(ii)] A popular choice of $L^0_k$ is by  the following formula proposed by Barzilai and Borwein \cite{BB}
(see also \cite{E.G.Birgin}):
\[
L^0_k  = \max \left\{ L_{\min } ,\min \left\{ L_{\max } ,\frac{(s^k)^T y^k}{\|s^k\|^2}\right\} \right\},
\]
where $s^k  = x^k  - x^{k - 1}$, $y^k=\nabla f(x^k)-\nabla f(x^{k - 1})$.
\end{itemize}
\end{remark}

\gap

We first show that for each outer iteration of the above NPG method, the number of its inner iterations
is finite.

\begin{theorem} \label{inner-convergence}
For each $k \ge 0$, the inner termination criterion \eqref{descent} is satisfied after at most
\[
\max\left\{\left\lfloor \frac{\log(L_f+c)-\log(L_{\min})}{\log \tau} +1\right\rfloor,1\right\}
\]
 inner iterations.
\end{theorem}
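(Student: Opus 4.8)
The plan is to show that once the trial stepsize parameter $L_k$ becomes large enough — specifically once $L_k \ge L_f + c$ — the nonmonotone descent test \eqref{descent} must hold, so the inner loop cannot cycle indefinitely. The key tool is the standard descent lemma for functions with Lipschitz continuous gradient: from \eqref{lipschitz} one has, for all $x,y\in\Re^n$,
\[
f(y) \le f(x) + \nabla f(x)^T(y-x) + \frac{L_f}{2}\|y-x\|^2 .
\]
First I would apply this with $x = x^k$ and $y = x^{k+1}$, where $x^{k+1}$ solves \eqref{subprob} for the current value of $L_k$. Since $x^{k+1}$ is a minimizer of the subproblem and $x^k \in \cFr$ is feasible, comparing the objective values at $x^{k+1}$ and $x^k$ gives
\[
\nabla f(x^k)^T(x^{k+1}-x^k) + \frac{L_k}{2}\|x^{k+1}-x^k\|^2 \le 0,
\]
i.e. $\nabla f(x^k)^T(x^{k+1}-x^k) \le -\tfrac{L_k}{2}\|x^{k+1}-x^k\|^2$. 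Substituting this into the descent lemma bound yields
\[
f(x^{k+1}) \le f(x^k) - \frac{L_k - L_f}{2}\|x^{k+1}-x^k\|^2 .
\]
Hence whenever $L_k \ge L_f + c$ we get $f(x^{k+1}) \le f(x^k) - \tfrac{c}{2}\|x^{k+1}-x^k\|^2$, and since $f(x^k) \le \max_{[k-M]_+ \le i \le k} f(x^i)$, the test \eqref{descent} is satisfied.

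It remains to count the inner iterations. The parameter starts at $L_k = L^0_k \ge L_{\min}$ and is multiplied by $\tau>1$ each time the test fails. After $j$ failures it equals $\tau^j L^0_k \ge \tau^j L_{\min}$. The argument above shows the test succeeds as soon as this quantity reaches $L_f + c$, which happens once $\tau^j L_{\min} \ge L_f + c$, i.e. $j \ge \log(L_f+c)/\log\tau - \log(L_{\min})/\log\tau$. Thus the number of inner iterations is at most $\lfloor (\log(L_f+c)-\log(L_{\min}))/\log\tau + 1\rfloor$; taking the maximum with $1$ covers the (degenerate) case where $L^0_k$ already exceeds $L_f+c$ — or more precisely where the displayed logarithmic quantity is non-positive — and the first trial already passes, giving the stated bound.

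I expect the argument to be essentially routine; the only mild subtlety is bookkeeping in the iteration count — being careful that the inner loop in step 1a)–1c) is indexed so that the first solve corresponds to $j=0$ and that the floor-plus-one expression correctly accounts for the last value of $L_k$ at which \eqref{descent} holds. A secondary point to state cleanly is that the test could of course succeed for some $L_k < L_f+c$ as well; the bound is only an upper estimate on the worst case, so it suffices to exhibit one threshold guaranteeing success, namely $L_f+c$.
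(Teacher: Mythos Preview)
Your proposal is correct and follows essentially the same route as the paper's proof: both use the descent lemma together with the fact that $x^{k+1}$ minimizes the subproblem over a set containing $x^k$ to obtain $f(x^{k+1}) \le f(x^k) - \tfrac{L_k - L_f}{2}\|x^{k+1}-x^k\|^2$, conclude that \eqref{descent} holds once $L_k \ge L_f + c$, and then bound the number of multiplications by $\tau$ needed to reach that threshold from $L_{\min}$. The paper organizes the count by splitting into the case $n_k=1$ versus $n_k>1$, but the substance is identical to yours.
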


\begin{proof}
Let $\bar L_k$ denote the final value of $L_k$ at the $k$th outer iteration,  and let
$n_k$ denote the number of inner iterations for the $k$th outer iteration. We divide the proof
into two separate cases.

Case 1): $\bar L_k=L^0_k$. It is clear that $n_k=1$.

Case 2): $\bar L_k<L^0_k$. Let $H(x)$ denote the objective function of \eqref{subprob}.
By the definition of $x^{k+1}$,
we know that $H(x^{k+1}) \le H(x^k)$, which implies that
\[
\nabla f(x^k)^T(x^{k+1}-x^k) +
\frac{L_k}{2}\|x^{k+1}-x^k\|^2 \le 0.
\]
In addition, it follows from \eqref{lipschitz} that
\[
f(x^{k+1}) \ \le \ f(x^k)+\nabla f(x^k)^T(x^{k+1}-x^k) +
\frac{L_f}{2}\|x^{k+1}-x^k\|^2.
\]
Combining these two inequalities, we obtain that
\[
f(x^{k+1})  \le   f(x^k) - \frac{L_k-L_f}{2}\|x^{k+1}-x^k\|^2.
\]
Hence, \eqref{descent} holds whenever $L_k \ge L_f+c$. This together with the definition of $\bar L_k$
implies that $\bar L_k/\tau < L_f+c$, that is, $\bar L_k <\tau(L_f+c)$.  In view of the definition of $n_k$,
we further have
\[
L_{\min} \tau^{n_k-1} \le L^0_k \tau^{n_k-1} =  \bar L_k <  \tau(L_f+c).
\]
Hence, $n_k \le \left\lfloor \frac{\log(L_f+c)-\log(L_{\min})}{\log \tau} +1\right\rfloor$.

Combining the above two cases, we see that the conclusion holds.
\end{proof}

\gap

We next establish convergence of the outer iterations of the NPG method.

\begin{theorem} \label{converge}
Let $\{x^k\}$ be the sequence generated by the above NPG method. There hold:
\begin{itemize}
\item[(1)] $\{f(x^k)\}$ converges and $\{\|x^k-x^{k-1}\|\} \to 0$.
 \item[(2)] Let $x^*$ be an arbitrary accumulation point of $\{x^k\}$ and $J^*=\{j: x^*_j \neq 0\}$. Then $x^*$ is a stationary point of the problem
\begin{equation} \label{stat-pt}
\begin{array}{ll}
\min\limits_x & f(x) \\
\mbox{s.t.} & \sum_{i=1}^n x_i = 1, \ 0 \le x_j \le u, \ j  \in  J^*; \\
                  & x_j = 0, \ j \notin J^*.
\end{array}
\end{equation}
Suppose further that $f$ is convex. Then
\begin{itemize}
\item[(2a)]
 $x^*$ is a local minimizer of problem \eqref{sparse-prob}
if $\|x^*\|_0=r$;
\item[(2b)]
 $x^*$ is a minimizer of problem \eqref{stat-pt} if $\|x^*\|_0<r$.
\end{itemize}
\end{itemize}
\end{theorem}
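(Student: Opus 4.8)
The plan is to take the three assertions in order; only part (1) needs real work, the rest being short consequences of the subproblem's optimality conditions.

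\textbf{Part (1).} Since $\cFr$ is compact and $f$ continuous, $f$ is bounded below on $\cFr$ and $\{x^k\}\subseteq\cFr$. I would run the standard Grippo--Lampariello--Lucidi nonmonotone analysis. Let $\ell(k)\in\{[k-M]_+,\dots,k\}$ attain $\max_{[k-M]_+\le i\le k}f(x^i)$. Since $\{[k+1-M]_+,\dots,k+1\}\subseteq\{[k-M]_+,\dots,k\}\cup\{k+1\}$ and \eqref{descent} gives $f(x^{k+1})\le f(x^{\ell(k)})$, the sequence $\{f(x^{\ell(k)})\}$ is nonincreasing, hence converges to some $\bar f$; in particular $\{f(x^k)\}$ is bounded. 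It then remains to upgrade this to $f(x^k)\to\bar f$ and $\|x^{k+1}-x^k\|\to0$. This I would do by the customary induction on $j$: applying \eqref{descent} at index $\ell(k)-j$ and using $f(x^{\ell(\cdot)})\to\bar f$ yields $\|x^{\ell(k)-j}-x^{\ell(k)-j-1}\|\to0$ and $f(x^{\ell(k)-j})\to\bar f$ for each fixed $j$; then, setting $\hat\ell(k)=\ell(k+M+1)$ so that $k+1\le\hat\ell(k)\le k+M+1$, finitely many of these limits combine, via the triangle inequality and uniform continuity of $f$ on $\cFr$, to give $\|x^{\hat\ell(k)}-x^k\|\to0$ and $\|x^{\hat\ell(k)}-x^{k+1}\|\to0$, whence $\|x^{k+1}-x^k\|\to0$ and $f(x^k)\to\bar f$. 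This index bookkeeping is the main obstacle, though it is routine.

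\textbf{Part (2), stationarity.} Fix an accumulation point $x^*$ with $x^{k_i}\to x^*$. From the proof of Theorem~\ref{inner-convergence} the accepted stepsize $\bar L_k$ at each iteration is bounded, so along a further subsequence $\bar L_{k_i}\to L^*$ for some $L^*>0$. Let $\Omega$ denote the feasible polytope of \eqref{stat-pt}. Every point of $\Omega$ has at most $|J^*|\le r$ nonzero entries, so $\Omega\subseteq\cFr$, and $x^*\in\Omega$. Optimality of $x^{k_i+1}$ in \eqref{subprob} then gives, for every $x\in\Omega$,
\[
\nabla f(x^{k_i})^T(x^{k_i+1}-x^{k_i})+\tfrac{\bar L_{k_i}}{2}\|x^{k_i+1}-x^{k_i}\|^2 \le \nabla f(x^{k_i})^T(x-x^{k_i})+\tfrac{\bar L_{k_i}}{2}\|x-x^{k_i}\|^2 .
\]
By (1) the left-hand side tends to $0$ (since $\|x^{k_i+1}-x^{k_i}\|\to0$ and $\nabla f(x^{k_i})\to\nabla f(x^*)$), so passing to the limit gives $\nabla f(x^*)^T(x-x^*)+\tfrac{L^*}{2}\|x-x^*\|^2\ge0$ for all $x\in\Omega$. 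As this expression vanishes at $x=x^*$, $x^*$ minimizes this convex quadratic over the convex set $\Omega$, and its first-order optimality condition is exactly $\nabla f(x^*)^T(x-x^*)\ge0$ for all $x\in\Omega$; that is, $x^*$ is a stationary point of \eqref{stat-pt}. No convexity of $f$ is used here.

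\textbf{Parts (2a) and (2b).} Now suppose $f$ is convex. Convexity together with the stationarity condition above gives $f(x)\ge f(x^*)+\nabla f(x^*)^T(x-x^*)\ge f(x^*)$ for all $x\in\Omega$, which is (2b). For (2a), assume $\|x^*\|_0=r$, so $|J^*|=r$. Let $N$ be the open ball about $x^*$ of radius $\min_{j\in J^*}|x^*_j|>0$; then every $x\in N$ satisfies $x_j\ne0$ for all $j\in J^*$, and if moreover $x\in\cFr$ then $\|x\|_0\le r=|J^*|$ forces $\{j:x_j\ne0\}=J^*$, i.e.\ $x\in\Omega$. Hence $N\cap\cFr\subseteq\Omega$, and since $f\ge f(x^*)$ on $\Omega$ we conclude $f(x)\ge f(x^*)$ for all $x\in N\cap\cFr$, so $x^*$ is a local minimizer of \eqref{sparse-prob}.
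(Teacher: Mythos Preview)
Your proof is correct and, for Part~(1) and Parts~(2a)--(2b), essentially identical to the paper's argument: the same nonmonotone $\ell(k)$ bookkeeping for (1), and the same ``convex plus stationary implies minimizer over $\Omega$'' followed by the neighborhood argument $N\cap\cFr\subseteq\Omega$ for (2a).

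The one genuine difference is in the stationarity argument of Part~(2). The paper passes to a further subsequence on which the iterates $x^k$ have a \emph{constant} support set $J\supseteq J^*$, observes that over the convex polytope $\Omega_J=\{x:\sum x_i=1,\ 0\le x_j\le u\ (j\in J),\ x_j=0\ (j\notin J)\}$ each $x^k$ is still optimal for the subproblem, writes the first-order condition $-\nabla f(x^{k-1})-\bar L_{k-1}(x^k-x^{k-1})\in\cN_{\Omega_J}(x^k)$, and then takes limits using outer semicontinuity of the normal cone map, finally shrinking from $\Omega_J$ to the feasible set of \eqref{stat-pt} via $\cN_{\Omega_J}(x^*)\subseteq\cN_{\tilde\Omega}(x^*)$. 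Your route is more direct: since the feasible set $\Omega$ of \eqref{stat-pt} is already contained in $\cFr$, you compare the subproblem value at $x^{k_i+1}$ with its value at an arbitrary $x\in\Omega$ and pass to the limit in that scalar inequality, obtaining that $x^*$ minimizes the limiting quadratic over $\Omega$ and hence satisfies the variational inequality there. This buys you a shorter argument with no constant-support subsequence and no normal-cone calculus; the paper's version, on the other hand, actually yields a slightly stronger intermediate conclusion (stationarity over the potentially larger set $\Omega_J$), though that extra strength is not used afterwards.
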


\begin{proof}
(1)  Notice that $f$ is continuous in
 $\Delta =\{x\in\Re^n: \sum^n_{i=1} x_i = 1, \ 0 \le x_i \le u \ \forall i\}$.  Since $\{x^k\} \subset \Delta$, it
follows that $\{f(x^k)\}$ is bounded below. Let $\ell(k)$ be an integer such that $[k-M]_+ \le \ell(k) \le k$ and
\[
f(x^{\ell(k)}) = \max\limits_{[k-M]_+ \le i \le k} f(x^i).
\]
It is not hard to observe from \eqref{descent} that $f(x^{\ell(k)})$ is decreasing. Hence, $\lim_{k \to \infty} f(x^{\ell(k)})=\hat f$ for some $\hat f \in \Re$. Using this relation, \eqref{descent}, and a similar
induction argument as used in \cite{WrNoFi09}, one can show that for all $j\ge 1$,
\[
\lim\limits_{k\to\infty} d^{l(k)-j} = 0, \quad\quad  \lim\limits_{k\to\infty} f(x^{l(k)-j})=\hat f,
\]
where $d^k = x^{k+1}-x^k$ for all $k\ge 0$. In view of these equalities, the uniform continuity of $f$
over $\Delta$, and a similar argument in \cite{WrNoFi09}, we can conclude that $\{f(x^k)\}$ converges and
$\{\|x^k-x^{k-1}\|\} \to 0$.

(2) Let $x^*$ be an arbitrary accumulation point of $\{x^k\}$. Then there exists a subsequence
$\cal K$ such that $\{x^k\}_\cK \to x^*$, which together with $\|x^{k}-x^{k-1}\| \to 0$ implies that
$\{x^{k-1}\}_\cK \to x^*$.  By considering a convergent subsequence of $\cK$ if necessary, assume
without loss of generality that there exists some index set $J$ such that  $x^k_j =0$ for every
$j \notin J, k\in \cK$ and $x^k_j > 0$ for all $j \in J, k\in \cK$. Let $\bar L_k$ denote the final value of
$L_k$ at the $k$th outer iteration. From the proof of Theorem \ref{inner-convergence}, we know that
$\bar L_k \in [L_{\min}, \tau(L_f+c)]$. By the definition of $x^k$, one can see that $x^k$ is a minimizer
of the problem
\[
\min\limits_{x \in \cFr} \left\{ \nabla f(x^{k-1})^T (x-x^{k-1}) + \frac{\bar L_{k-1}}{2} \|x-x^{k-1}\|^2\right\}.
\]
Using this fact and the definition of $J$,  one can observe that $x^k$ is also the minimizer of
the problem
\beq \label{subprob1}
\min\limits_{x \in \Omega}  \left\{\nabla f(x^{k-1})^T (x-x^{k-1}) + \frac{\bar L_{k-1}}{2} \|x-x^{k-1}\|^2 \right\},
\eeq
where
\[
\Omega = \left\{x \in \Re^n: \ba {l}
\sum_{i=1}^n x_i = 1, \ 0 \le x_j \le u, \ j \in  J, \\
  x_j = 0, \ j \notin J.
\ea\right\}.
\]
By the first-order optimality conditions of \eqref{subprob1}, we have
\beq \label{1st-cond-k}
-\nabla f(x^{k-1})  - \bar L_{k-1} (x^k-x^{k-1}) \in \cN_\Omega(x^k) \ \ \ \forall k \in \cK,
\eeq
where $\cN_\Omega(x)$ denotes the normal cone of $\Omega$ at $x$. Using
$\bar L_{k-1} \in [L_{\min}, \tau(L_f+c)]$, $\{x^{k-1}\}_\cK \to x^*$, $\|x^k-x^{k-1}\| \to 0$,
outer continuity of $\cN_\Omega(\cdot)$, and taking limits on both sides of \eqref{1st-cond-k} as $k \in \cK \to \infty$,
one can obtain that
\beq \label{stat-cond}
-\nabla f(x^*)  \in \cN_\Omega(x^*).
\eeq
Let $\tilde \Omega$ be the feasible region of problem \eqref{stat-pt}. Clearly, $J^* \subseteq J$ and
hence $\tilde \Omega \subseteq \Omega$, which implies that $\cN_\Omega(x^*) \subseteq
\cN_{\tilde \Omega}(x^*)$. It then follows from \eqref{stat-cond} that $-\nabla f(x^*)  \in
\cN_{\tilde \Omega}(x^*)$.  Hence, $x^*$ is a stationary point of problem \eqref{stat-pt}.

We next prove statements (2a) and (2b) under the assumption that $f$ is convex.

(2a) Suppose that $\|x^*\|_0=r$ and $f$ is convex. We will show that $x^*$ is a local minimizer
of problem \eqref{sparse-prob}.  Let $\epsilon = \min\{x^*_j: j\in J^*\}$,
 \[
\tilde \cO(x^*;\epsilon) = \{x\in\tilde \Omega: \|x-x^*\| < \epsilon\}, \quad\quad  \cO(x^*;\epsilon) = \{x\in\cFr: \|x-x^*\| < \epsilon\},
\]
where $\tilde \Omega$ is defined above. Since $f$ is convex and $x^*$ is a stationary point of
\eqref{stat-pt}, one can conclude that $x^*$ is a minimizer of problem \eqref{stat-pt}, which implies
that $f(x) \ge f(x^*)$ for all $x\in \tilde \cO(x^*;\epsilon)$. In addition, using the definition of
$\epsilon$ and $|J^*|=r$, it is not hard to observe that $\cO(x^*;\epsilon)=\tilde \cO(x^*;\epsilon)$.
It then follows that $f(x) \ge f(x^*)$ for all $x\in \cO(x^*;\epsilon)$, which implies that $x^*$ is
a local minimizer of problem \eqref{sparse-prob}.

(2b) Suppose that $\|x^*\|_0<r$ and $f$ is convex. Recall from above that $x^*$ is a stationary point
of \eqref{stat-pt}. Moreover, notice that problem \eqref{stat-pt} becomes a convex optimization problem when
$f$ is convex. Therefore,  the conclusion of this statement immediately follows.
\end{proof}

\gap

One can observe that problem \eqref{subprob} is equivalent to
\[
x^{k+1} \in \Arg\min\limits_{x \in \cFr} \left\{\left\|x-\left(x^k-\frac{1}{L_k}\nabla f(x^k)\right)\right\|^2\right\},
\]
which is a special case of a more general  problem
\beq \label{proj-subprob}
\min\limits_{x \in \cFr} \|x-a\|^2
\eeq
for some $a\in\Re^n$.
In the remainder of this section we will show that problem \eqref{proj-subprob}  has a
closed-form solution, and moreover, it can be found in linear time. Before proceeding,
we review a technical lemma established in \cite{PD}.

\begin{lemma} \label{lem1}
Let $\cX_i \subseteq \Re$ and $\phi_i: \Re \to \Re$ for $i=1,\ldots,n$ be given. Suppose
that $r$ is a positive integer and $0 \in \cX_i$ for all $i$. Consider the following
$l_0$ minimization problem:
\beq \label{l0-p1}
\min\left\{\phi(x) = \sum^n_{i=1} \phi_i(x_i): \|x\|_0 \le r, \ x \in \cX_1 \times
\cdots \times \cX_n \right\}.
\eeq
Let $\tx^*_i\in \Arg\min\{\phi_i(x_i): x_i \in \cX_i\}$ and $I^* \subseteq \{1,\ldots, n\}$ be
the index set corresponding to the $r$ largest values of $\{v^*_i\}^n_{i=1}$, where
$v^*_i = \phi_i(0)-\phi_i(\tx^*_i)$ for $i=1, \ldots, n$. Then $x^*$ is an optimal solution of
problem \eqref{l0-p1}, where $x^*$ is defined as follows:
\[
x^*_i = \left\{\ba{ll}
\tx^*_i & \mbox{if} \ i \in I^*; \\
0  & \mbox{otherwise},
\ea\right. \quad i=1, \ldots, n.
\]
\end{lemma}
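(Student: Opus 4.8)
The plan is to verify that $x^*$ is feasible and then to bound $\phi$ from below by $\phi(x^*)$ over the entire feasible set, using the separable structure of \eqref{l0-p1}. First I would record the basic observation that, since $0\in\cX_i$ and $\tx^*_i\in\Arg\min\{\phi_i(x_i):x_i\in\cX_i\}$, we have $v^*_i=\phi_i(0)-\phi_i(\tx^*_i)\ge 0$ for every $i$. Feasibility of $x^*$ is then immediate: each component $x^*_i$ is either $\tx^*_i\in\cX_i$ or $0\in\cX_i$, so $x^*\in\cX_1\times\cdots\times\cX_n$, and $\|x^*\|_0\le|I^*|=r$ (possibly strictly smaller if some $\tx^*_i=0$).

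For optimality, let $x$ be an arbitrary feasible point and set $S=\{i:x_i\neq 0\}$, so that $|S|\le r$. Splitting the sum over $S$ and its complement and rearranging gives
\[
\phi(x) \;=\; \sum_{i\notin S}\phi_i(0)+\sum_{i\in S}\phi_i(x_i)
\;=\; \sum_{i=1}^n\phi_i(0)-\sum_{i\in S}\bigl(\phi_i(0)-\phi_i(x_i)\bigr).
\]
Since $x_i\in\cX_i$ for $i\in S$ and $\tx^*_i$ minimizes $\phi_i$ over $\cX_i$, we have $\phi_i(0)-\phi_i(x_i)\le\phi_i(0)-\phi_i(\tx^*_i)=v^*_i$, whence
\[
\phi(x)\;\ge\;\sum_{i=1}^n\phi_i(0)-\sum_{i\in S}v^*_i.
\]

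The key step is to show $\sum_{i\in S}v^*_i\le\sum_{i\in I^*}v^*_i$. This holds because $|S|\le r=|I^*|$, the $v^*_i$ are all nonnegative, and $I^*$ indexes the $r$ largest among $\{v^*_i\}^n_{i=1}$; this is the only place where the hypothesis $0\in\cX_i$ (via $v^*_i\ge 0$) is genuinely needed, and it is really the crux of the argument, particularly when $|S|<r$. Combining the inequalities yields $\phi(x)\ge\sum_{i=1}^n\phi_i(0)-\sum_{i\in I^*}v^*_i$. Finally I would evaluate $\phi(x^*)=\sum_{i\notin I^*}\phi_i(0)+\sum_{i\in I^*}\phi_i(\tx^*_i)=\sum_{i=1}^n\phi_i(0)-\sum_{i\in I^*}v^*_i$, so that $\phi(x)\ge\phi(x^*)$ for every feasible $x$, which proves $x^*$ is an optimal solution of \eqref{l0-p1}.

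I do not anticipate a serious obstacle: the proof is a short, self-contained separability computation. The only points requiring a little care are the nonnegativity of the $v^*_i$ and the handling of the case $|S|<r$, both of which reduce to the same elementary fact about the sum of the $r$ largest of finitely many nonnegative numbers.
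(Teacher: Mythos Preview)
Your proof is correct and complete. Note, however, that the paper does not actually prove this lemma: it is quoted as a technical result established in \cite{PD} and stated without proof, so there is no in-paper argument to compare against. Your separability computation---writing $\phi(x)=\sum_i\phi_i(0)-\sum_{i\in S}(\phi_i(0)-\phi_i(x_i))$, bounding each summand by $v^*_i$, and then using that $I^*$ maximizes $\sum_{i\in S}v^*_i$ over all $S$ with $|S|\le r$ because $v^*_i\ge 0$---is precisely the standard route and is essentially how the result is derived in the cited reference.
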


\gap

We are now ready to establish that problem \eqref{proj-subprob} has a  closed-form solution that
can be computed efficiently.

\begin{theorem} \label{closed-form}
Given any $a\in \Re^n$, let $I^* \subseteq \{1,\ldots, n\}$ be the index set corresponding to
the $r$ largest values of $\{a_i\}^n_{i=1}$. Suppose that $\lambda^*\in \Re$ is such that
\beq \label{lambdas}
\sum\limits_{i \in I^*}  \Pi_{[0,u]}(a_i+\lambda^*)=1,
\eeq
where
\[
 \Pi_{[0,u]}(t) = \left\{\ba{ll}
0 & \mbox{if} \ t \le 0;  \\
t & \mbox{if} \  0 < t <u; \\
u & \mbox{if} \ t \ge u
\ea\right. \quad\quad \forall t\in\Re .
\]
Then $x^*$ is an optimal solution of problem \eqref{proj-subprob}, where $x^*$ is defined as
follows:
\[
x^*_i = \left\{\ba{ll}
\Pi_{[0,u]}(a_i+\lambda^*) & \mbox{if} \ i \in I^*; \\
0  & \mbox{otherwise},
\ea\right. \quad i=1, \ldots, n.
\]
\end{theorem}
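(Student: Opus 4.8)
The plan is to reduce the linearly coupled problem \eqref{proj-subprob} to the purely separable $l_0$-minimization handled by Lemma \ref{lem1}, by penalizing the single equality constraint $\sum_i x_i = 1$ with the multiplier $\lambda^*$ supplied by \eqref{lambdas}, and then comparing the penalized objective over $\cFr$ with its value over the larger box-plus-cardinality set.

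First I would fix an arbitrary $\lambda \in \Re$ and use the identity
\[
\sum_{i=1}^n \bigl(x_i-(a_i+\lambda)\bigr)^2 \;=\; \|x-a\|^2 - 2\lambda\sum_{i=1}^n x_i + \sum_{i=1}^n\bigl((a_i+\lambda)^2-a_i^2\bigr),
\]
whose last term does not depend on $x$. Hence the separable problem $\min\{\sum_i(x_i-(a_i+\lambda))^2 : \|x\|_0 \le r,\ x\in[0,u]^n\}$ has exactly the same minimizers as $\min\{\|x-a\|^2 - 2\lambda(\sum_i x_i - 1) : \|x\|_0 \le r,\ x\in[0,u]^n\}$. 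The former is of the form \eqref{l0-p1} with $\phi_i(t)=(t-(a_i+\lambda))^2$ and $\cX_i=[0,u]\ni 0$, so Lemma \ref{lem1} applies: a minimizer is obtained by putting $\tx^*_i=\Pi_{[0,u]}(a_i+\lambda)$ on the $r$ indices with the largest $v^*_i=(a_i+\lambda)^2-\bigl(a_i+\lambda-\Pi_{[0,u]}(a_i+\lambda)\bigr)^2$ and $0$ elsewhere.

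The crux is to identify these ``top-$r$ by $v^*$'' indices with $I^*$, the ``top-$r$ by $a$'' indices. For this I would run a short case analysis on $t:=a_i+\lambda$ relative to $[0,u]$: one finds $v^*_i=0$ when $t\le 0$, $v^*_i=t^2$ when $0<t<u$, and $v^*_i=2ut-u^2$ when $t\ge u$; since $u>0$, this is a continuous, nondecreasing function of $t$, hence nondecreasing in $a_i$ for any fixed $\lambda$. Therefore ordering the indices by decreasing $a_i$ simultaneously orders them by decreasing $v^*_i$, so $I^*$ is a legitimate choice of the index set in Lemma \ref{lem1} \emph{for every} $\lambda$. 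Consequently, for every $\lambda\in\Re$ the vector $\hat x(\lambda)$ with $\hat x(\lambda)_i=\Pi_{[0,u]}(a_i+\lambda)$ for $i\in I^*$ and $\hat x(\lambda)_i=0$ otherwise minimizes $\|x-a\|^2-2\lambda(\sum_i x_i-1)$ over $\{x:\|x\|_0\le r,\ x\in[0,u]^n\}$.

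Finally I would set $\lambda=\lambda^*$. By \eqref{lambdas} we get $\sum_i\hat x(\lambda^*)_i=1$, so $\hat x(\lambda^*)=x^*$ lies in $\cFr$ (its support is contained in $I^*$, its entries lie in $[0,u]$, and they sum to $1$) and the penalty vanishes at it; moreover $\cFr\subseteq\{x:\|x\|_0\le r,\ x\in[0,u]^n\}$. Hence for every $x\in\cFr$,
\[
\|x-a\|^2 \;=\; \|x-a\|^2-2\lambda^*\Bigl(\textstyle\sum_i x_i-1\Bigr) \;\ge\; \|x^*-a\|^2-2\lambda^*\Bigl(\textstyle\sum_i x^*_i-1\Bigr) \;=\; \|x^*-a\|^2,
\]
using $\sum_i x_i=1$ on the left and the minimality of $\hat x(\lambda^*)$ in the middle, so $x^*$ solves \eqref{proj-subprob}. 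I expect the monotonicity of $v^*$ in $a$ to be the only genuine obstacle; everything else is bookkeeping around Lemma \ref{lem1}. For completeness one can also note that a $\lambda^*$ satisfying \eqref{lambdas} exists, since $\lambda\mapsto\sum_{i\in I^*}\Pi_{[0,u]}(a_i+\lambda)$ is continuous and nondecreasing and tends to $0$ as $\lambda\to-\infty$ and to $ru\ge 1$ as $\lambda\to+\infty$; and the whole formula is computable in linear time, since $I^*$ is an order statistic and $\lambda^*$ solves a piecewise-linear equation whose breakpoints can be located by a median-type search.
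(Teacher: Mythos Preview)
Your proof is correct and follows essentially the same route as the paper's: both penalize the simplex constraint with the multiplier $\lambda^*$, invoke Lemma~\ref{lem1} on the resulting separable box-plus-cardinality problem with $\phi_i(t)=(t-(a_i+\lambda^*))^2$, and identify the top-$r$ indices for $v^*_i$ with $I^*$ via the monotonicity of $t\mapsto t^2-(t-\Pi_{[0,u]}(t))^2$. The only cosmetic differences are that the paper establishes this monotonicity by computing $\psi'(t)=2\Pi_{[0,u]}(t)\ge 0$ rather than your piecewise case analysis, and it works directly with $\lambda^*$ rather than first treating a general $\lambda$; your closing remarks on the existence of $\lambda^*$ and linear-time computability are handled separately in the paper's Theorem~\ref{thm:lambdas}.
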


\begin{proof}
Let $d(x)$ and $d^*$ denote the objective function and the optimal value of \eqref{proj-subprob},
respectively, and $x^*$ be defined above. We can observe that $\|x^*\|_0 \le r$, $\sum^n_{i=1} x^*_i =1$ and $0 \le x^*_j \le u$ for all $j$, which implies that $x^*$ is a feasible solution of
\eqref{proj-subprob}, namely, $x^*\in\cFr$. Hence, $d(x^*) \ge d^*$.  Let
$\psi(t)=t^2-(t-\Pi_{[0,u]}(t))^2$ for every $t\in\Re$. It is not hard to see that $\psi$ is
differentiable, and moreover,
\[
\psi'(t) = 2t- 2(t-\Pi_{[0,u]}(t)) = 2\Pi_{[0,u]}(t) \ge 0.
\]
Hence, $\psi(t)$ is increasing in $(-\infty,\infty)$. Let $\phi_i(x_i)=(x_i-a_i-\lambda^*)^2$,
$\cX_i=[0,u]$, $\tx^*_i=\arg\min\{\phi_i(x_i): x_i \in \cX_i\}$  and $v^*_i=\phi_i(0)-\phi_i(\tx^*_i)$
for all $i$. One can observe that  $\tx^*_i=\Pi_{[0,u]}(a_i+\lambda^*)$
and $v^*_i=\psi(a_i+\lambda^*)$ for all $i$. By the definition of $I^*$ and the
monotonicity of $\psi$, we conclude that $I^*$ is the index set corresponding to the $r$ largest
values of $\{v^*_i\}^n_{i=1}$. In view of Lemma \ref{lem1} and the definitions of $x^*$ and $\tx^*$,
one can see that $x^*$ is an optimal solution to the problem
\[
\underline{d}^*=\min\limits_{0 \le x \le u,\|x\|_0\leq r} \left\{\|x-a\|^2 - 2 \lambda^*(\sum_{i=1}^n x_i-1)\right\},
\]
and hence,
\[
\underline{d}^*=\|x^*-a\|^2 - 2 \lambda^*(\sum_{i=1}^n x_i^*-1)=\|x^*-a\|^2 =d(x^*).
\]
In addition, we can observe that $d^* \ge \underline{d}^*$. It then follows that $d^* \ge d(x^*)$. Recall that $d(x^*) \ge d^*$. Hence, we have $d(x^*) = d^*$. Using this relation and $x^*\in\cFr$, we
conclude that $x^*$ is an optimal solution of  problem \eqref{proj-subprob}.
\end{proof}

\gap

We next show that a $\lambda^*$ satisfying \eqref{lambdas} can be computed in linear time, which
together with Theorem \ref{closed-form} implies that problem \eqref{proj-subprob} can be solved in
linear time as well.

\begin{theorem} \label{thm:lambdas}
For any $a\in\Re^n$ and $u \ge 1/n$, the equation
\beq \label{root-finding}
h(\lambda) := \sum\limits^n_{i=1}  \Pi_{[0,u]}(a_i+\lambda)-1=0.
\eeq
has at least a root $\lambda^*$, and moreover, it can be computed in $O(n)$ time.
\end{theorem}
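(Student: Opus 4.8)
The plan is to exploit the structure of $h(\lambda)=\sum_{i=1}^n \Pi_{[0,u]}(a_i+\lambda)-1$. First I would observe that each summand $\lambda \mapsto \Pi_{[0,u]}(a_i+\lambda)$ is continuous, nondecreasing, and piecewise linear, so $h$ is continuous, nondecreasing, and piecewise linear as well. For the existence of a root, note that for $\lambda$ very negative every term is $0$, so $h(\lambda)=-1<0$, while for $\lambda$ very large every term equals $u$, so $h(\lambda)=nu-1 \ge 0$ by the hypothesis $u\ge 1/n$. The intermediate value theorem then gives a $\lambda^*$ with $h(\lambda^*)=0$. (One should also note that $h$ need not be strictly increasing, so the root may not be unique, but any root suffices for Theorem~\ref{closed-form}; alternatively one can single out, say, the smallest root.)

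The heart of the theorem is the $O(n)$ running time. The breakpoints of $h$ are exactly the $2n$ values $\{-a_i\}_{i=1}^n \cup \{u-a_i\}_{i=1}^n$; between consecutive breakpoints $h$ is affine with a known slope (the number of indices $i$ for which $a_i+\lambda\in(0,u)$). A root lies in the interval between two consecutive breakpoints on which $h$ changes sign, and within that interval $\lambda^*$ is found by a single linear interpolation. The naive approach would sort the $2n$ breakpoints in $O(n\log n)$ time; to get $O(n)$ I would instead use a linear-time selection / median-based search. Concretely: maintain a candidate interval, repeatedly pick the median of the breakpoints currently in the interval using the $O(n)$ median algorithm, evaluate $h$ at that median (which costs $O(n)$), and discard the half of the breakpoints on the wrong side of the sign change. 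Since the number of breakpoints halves each round, the total cost is $O(n)+O(n/2)+O(n/4)+\cdots = O(n)$. Once the final bracketing interval contains no breakpoints in its interior, $h$ is affine there and $\lambda^*$ is obtained in closed form by interpolation.

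The main obstacle is bookkeeping the incremental evaluation of $h$ efficiently: naively recomputing $h$ from scratch at each probed value already costs $O(n)$ per probe, so the halving argument gives $O(n)$ overall, but one must be careful that the per-round work (median selection plus evaluation plus partitioning) really is linear in the \emph{current} number of breakpoints, not in $n$. I would make this precise by tracking two running quantities — the number of indices already forced to the value $u$ and the partial sum of $a_i$ over indices currently in the ``linear'' regime — and updating them as breakpoints are discarded, so that evaluating $h$ at a new probe and re-partitioning both cost $O(\#\text{remaining breakpoints})$. A secondary point worth addressing is the degenerate case where $h$ is identically $0$ on a whole interval (so the sign-change interval degenerates); this is harmless since any point of that interval is a valid $\lambda^*$, but the stopping rule must detect it. With these details handled, the geometric-series bound closes the argument.
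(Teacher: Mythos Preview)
Your existence argument is identical to the paper's: continuity of $h$, the limiting values $h(\lambda)\to -1$ and $h(\lambda)\to nu-1\ge 0$, and the intermediate value theorem.

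For the $O(n)$ running time, however, you take a genuinely different route. The paper's argument is much more elementary: it simply lists the $2n$ breakpoints $\{-a_i\}\cup\{u-a_i\}$, assumes them arranged in increasing order as $\lambda_1<\cdots<\lambda_k$, and then sweeps through them once, updating the slope of $h$ by $+1$ at each $-a_i$ and $-1$ at each $u-a_i$ and the value $h(\lambda_{i+1})$ from $h(\lambda_i)$ by a single multiply--add. The sign change is located during the sweep and $\lambda^*$ is recovered by one linear interpolation. The paper then declares the ``arithmetic operation cost'' of this procedure to be $O(n)$, but it never accounts for the cost of producing the sorted list of breakpoints; taken literally, the argument as written yields $O(n\log n)$, not $O(n)$.

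Your median-based prune-and-search is the standard way to close exactly this gap (it is the same idea behind linear-time simplex projection), and with the bookkeeping you outline --- maintaining the count of indices already saturated at $u$ and the running sum of the active $a_i$ so that each round costs $O(\#\text{remaining breakpoints})$ --- the geometric series does give a true $O(n)$ bound. One small slip: your sentence ``naively recomputing $h$ from scratch at each probed value already costs $O(n)$ per probe, so the halving argument gives $O(n)$ overall'' is not right; $O(n)$ per probe over $O(\log n)$ probes is $O(n\log n)$. You immediately correct this in the next clause, so the final argument is sound, but you should delete or rephrase that sentence. In short: the paper's proof is simpler to implement but tacitly relies on sorting; your proof is more involved but actually delivers the claimed $O(n)$ complexity.
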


\begin{proof}
One can observe that $h$ is continuous in $(-\infty,\infty)$, and moreover,  $h(\lambda) = -1$
when $\lambda$ is sufficiently small and $h(\lambda) =nu-1 \ge 0$ when $\lambda$ is
sufficiently large. Hence, \eqref{root-finding} has at least a root $\lambda^*$.

We next show that a root $\lambda^*$ to \eqref{root-finding} can be computed in $O(n)$ time. Indeed,
it is not hard to observe that $h$ is a piecewise linear increasing function in
$(-\infty,\infty)$ with breakpoints $\{-a_1,\ldots,-a_n,-a_1+u,\ldots,-a_n+u\}$.
Suppose that only $k$ of these breakpoints are distinct and they are arranged in strictly increasing
order $\{\lambda_1 < \ldots < \lambda_k\}$. The value of $h$ at each $\lambda_i$ and the slope
of each piece of $h$ can be evaluated iteratively. Indeed, let $\lambda_0=-\infty$. Observe that
$h(\lambda)=-1$ for all $\lambda \le \lambda_1$. Hence, $h$ has slope $s_0=0$ in $(-\infty, \lambda_1]$ and $h(\lambda_1)=-1$. Suppose that $h$ has slope $s_{i-1}$ in $(\lambda_{i-1}, \lambda_i]$, and that  $h(\lambda_i)$ is already computed, and also that
there are $m_i$ number of $\{-a_1,\ldots,-a_n\}$ and $n_i$ number of $\{-a_1+u,\ldots,-a_n+u\}$ equal to $\lambda_i$. Then the slope of $h$ in $(\lambda_{i},\lambda_{i+1}]$ is $s_i=s_{i-1}+m_i-n_i$, which yields
$h(\lambda_{i+1})=h(\lambda_i)+s_i(\lambda_{i+1}-\lambda_i)$ for $i=1, \ldots,k-1$.  Since $h(\lambda_1)=-1$, $h(\lambda_k)=nu-1 \ge 0$ and $h$ is increasing, there exists some
$1 \le j <k$ such that $h(\lambda_j) < 0$ and $h(\lambda_{j+1}) \ge 0$. If $h(\lambda_{j+1})=0$, then
$\lambda^*=\lambda_{j+1}$ is a root to \eqref{root-finding}. Otherwise, $\lambda^* \in (\lambda_j,\lambda_{j+1})$ and $h(\lambda^*)=0$. Using these facts and the relation $h(\lambda)=h(\lambda_j)+s_j(\lambda-\lambda_j)$ for $\lambda \in (\lambda_j,\lambda_{j+1})$,
we can have
\[
\lambda^* = \lambda_j-h(\lambda_j)/s_j.
\]
In addition, one can observe that the arithmetic operation cost of this root-finding procedure is
$O(n)$.
\end{proof}

\section{Numerical results} \label{result}

In this section, we conduct numerical experiments to compare the performance of the  NPG method
proposed in Section \ref{method} with the hybrid evolutionary algorithm \cite{Evolutionary} and the
hybrid half thresholding algorithm \cite{L1/2} for solving index tracking problems. It shall be
mentioned that the NPG method solves the $l_0$ constrained model \eqref{index-track} with $u=0.5$
while the hybrid evolutionary algorithm solves a mixed integer programming model and the hybrid half
thresholding algorithm \cite{L1/2} solves an $l_{1/2}$ regularized index tracking model. These
three methods were coded in Matlab, and all computations were performed on a HP dx7408 PC (Intel
core E4500 CPU, 2.2GHz,1GB RAM) with Matlab 7.9 (R2009b).

The data sets used in our experiments are selected from the standard ones in
OR-library \cite{OR_library} and the CSI 300 index from China Shanghai-Shenzhen stock market.
For the standard data sets,  weekly prices of the stocks from 1992 to 1997 of Hang Seng (Hong Kong),
DAX 100 (Germany), FTSE (Great Britain), Standard and Poor's 100 (USA), the Nikkei index (Japan), the
Standard and Poor's 500 (USA), Russell 2000 (USA) and Russell 3000 (USA) are used. For CSI 300 index,
the daily prices of 300 stocks from 2011 to 2013 in China stock market are considered.
According to the sample scale, we divide the above data sets into two categories:
small data sets including Hang Seng, DAX 100, FTSE , Standard and Poor's 100, the Nikkei index; and
large data sets including CSI 300, Standard and Poor's 500, Russell 2000 and Russell 3000. As in
Torrubiano and Alberto \cite{Evolutionary}, each data set is partitioned into two subsets: a training set
and a testing set. The training set, called in-sample set, consists of the first half of the data and
is used to compute the optimal index tracking portfolio. We also use the in-sample set  and the
formula for $TE$ given in \eqref{index-track} to calculate the tracking error, which is called in-sample
tracking error ($TEI$) of the portfolio.  The testing set, called out-of-sample set, contains the rest of the
data and is used to test the performance of the resulting optimal index tracking portfolio. In particular,
we use the formula for $TE$ in \eqref{index-track} with $(R,y)$ replaced by the out-of-sample set to
calculate the tracking error, which is called out-sample tracking error ($TEO$) of the portfolio.
\textcolor[rgb]{1.00,0.00,0.00}{In addition, we denote the true sparsity of the optimal output generated by each method by $S_{true}$.}

For the NPG method, we set $L_{\min}=10^{-8}$, $L_{\max}=10^8$, $\tau=2$, $c=10^{-4}$, and
$M=3$ for small data sets and $M=5$ for large data sets. For the hybrid half thresholding algorithm,
the lower and upper bounds are chosen to be 0 and 0.5, respectively.  We terminate
these methods when the absolute change of the approximate solutions over two consecutive iterations
is below $10^{-6}$ or the maximum iteration is $10,000$. For the hybrid evolutionary algorithm,
we set the lower bound to 0, the upper bound to 0.5, initial population size to $100$, mutation
probability to $1\%$, cross probability to $50\%$, and maximum iterations to $10,000$. In addition,
we randomly choose a feasible point of problem \eqref{index-track} as a common initial point for these
three methods.

In order to measure the out-of-sample performance and the consistency between in-sample and
 out-of-sample, we introduce the following two criteria.
\bi
\item
 Consistency: The consistency between in-sample and out-of-sample tracking errors of a portfolio
given by a method $A$  is defined as
\begin{equation*}
Cons(A)=|TEI_A-TEO_A|,
\end{equation*}%
where $TEI_A$ and $TEO_A$ are the in-sample and out-of-sample tracking errors of a portfolio
generated by the method $A$.
Clearly, the smaller value of $Cons(A)$ means that the portfolio by
$A$ has more consistency between in-sample and out-of-sample tracking errors and thus it is more
robust (or less sensitive) with respect to the sample data used for model \eqref{index-track}.

\item
Superiority of out-of-sample: We define
\begin{equation*}
SupO(A,B)=\frac{TEO_{B}-TEO_{A}}{TEO_{B}}\times 100\%,
\end{equation*}%
where $TEO_{A}$ and $TEO_{B}$ are out-of-sample tracking error of the portfolio by methods $A$
and $B$, respectively. One can see that if $SupO(A,B)>0$, $TEO_{A}$ is smaller than $TEO_{B}$,
i.e., the portfolio by method $A$ is superior to that by method $B$ in terms of  out-of-sample
tracking error; and it is very likely that the portfolio by $A$ has a smaller expected tracking error
and thus it is a better estimation to the underlying statistical regression model.
\ei

\subsection{Results on small data sets}\label{test1}

In this subsection, we compare the performance of the NPG method
with the hybrid evolutionary algorithm \cite{Evolutionary} and the hybrid half thresholding algorithm \cite{L1/2} on five small data sets, which are Hang Seng, DAX 100, FTSE, Standard and
Poor's 100, and Nikkei 225. For convenience of presentation, we abbreviate these three
approaches as $l_0$, MIP and $l_{1/2}$ since they are the methods for $l_0$, MIP and $l_{1/2}$ models,
respectively. In order to compare fairly the performance of these methods, we tailor their model
parameters so that the resulting portfolios have same density (i.e., same number of nonzero
entries).

Numerical results are presented in Tables 1 and 2, where $N$ denotes the number of assets
in a data set.  In particular,  we report in Table 1 in-sample error and out-of sample error of
the portfolios generated by the aforementioned three methods. In Table 2, we report the
 consistency between in-sample and out-of-sample errors, and the superiority of out-of-sample
errors for the portfolios generated by these methods. The number of nonzero portfolios given by
these methods is listed in the column named ``density''. From Table 2, we can make the following
observations.


\bi
\item[(i)]
The $l_0$-based method (i.e., NPG method) generally has higher consistency between in-sample error
and out-of-sample error than the MIP- and $l_{1/2}$-based methods (namely, hybrid evolutionary and
half thresholding algorithms) since $Cons(l_0)<Cons(MIP)$ holds for 100\% (30/30)
instances and $Cons(l_0)<Cons(l_{1/2})$ holds for 77.3\% (22/30) instances.
\item[(ii)]
The $l_0$-based method is generally superior to the MIP- and $l_{1/2}$-based methods in terms of
out-of-sample error since $SupO(l_0,MIP)>0$ holds for 90\% (27/30) instances  and
$SupO(l_0,l_{1/2})>0$ holds for 93.3\% (28/30) instances.
\ei
\begin{table}[ht]\label{biao1}
\caption{\footnotesize{The in-sample and out-of-sample tracking errors on small data sets.}}
{\scriptsize \ \centering \renewcommand\arraystretch{1.2} }
\par
\begin{center}
{\scriptsize
\begin{tabular}{ccccccccccc}
\hline
Index & Density & \multicolumn{3}{c}{$l_0$} &
\multicolumn{3}{c}{MIP} & \multicolumn{3}{c}{$l_{1/2}$} \\
&  & $TEI$ & $TEO$ & $S_{true}$ & $TEI$ & $TEO$ & $S_{true}$ &$TEI$ & $TEO$ & $S_{true}$ \\ \hline
Hang & 5    & 6.23e-5 & 5.17e-5 &  5  & 5.69e-5 & 8.87e-5 &  5  & 8.36e-5 & 7.07e-5  & 5 \\
Seng & 6    & 4.29e-5 & 3.45e-5  & 6  & 4.85e-5 & 7.82e-5 &  6 & 8.58e-5 & 7.19e-5  & 6\\
($N$=31) & 7   &2.37e-5 &3.83e-5  &  7 & 3.26e-5 & 5.38e-5 &  7  & 6.45e-5 & 4.59e-5 & 7\\
& 8 & 2.38e-5 & 2.50e-5 & 8  & 2.06e-5 & 3.09e-5  & 8  & 3.20e-5 & 2.95e-5 & 8\\
& 9 & 2.00e-5 & 2.16e-5  &  9 & 1.95e-5 & 2.80e-5  &  9 & 3.96e-5 & 2.44e-5 & 9\\
& 10 & 1.58e-5 & 1.55e-5 & 10 & 1.86e-5 & 2.77e-5  & 10 & 2.33e-5 & 2.34e-5 & 10
\vspace{0.2cm} \\

DAX & 5 & 4.10e-5 & 1.08e-4 &  5  & 2.21e-5 & 1.02e-4 & 5   & 4.88e-5 & 1.18e-4  & 5 \\
($N$=85) & 6 & 3.07e-5 & 1.00e-4  &  6   & 1.82e-5 & 9.43e-5   & 6 & 3.86e-5 & 1.13e-4 & 6 \\
& 7 & 2.56e-5 & 9.68e-5 &  7  & 1.47e-5 & 1.02e-4 & 7  & 2.47e-5 & 1.04e-4 & 7\\
& 8 & 1.68e-5 & 8.71e-5 & 8 & 1.48e-5 & 8.78e-5   &8 & 2.66e-5 & 9.36e-5 & 8\\
& 9 & 1.54e-5 & 8.23e-5 &  9 & 1.05e-5 & 8.63e-5  & 9 & 3.44e-5 & 9.72e-5 & 9\\
& 10 & 1.88e-5 & 8.11e-5 & 10 & 8.21e-6 & 7.76e-5  & 10 & 2.23e-5 & 1.03e-4  & 10
 \vspace{0.2cm}\\

FTSE & 5 & 1.05e-4 & 8.43e-5 & 5  & 6.92e-5 & 9.87e-5 & 5   & 1.22e-4 & 8.80e-5 & 5\\
($N$=89) & 6 & 7.29e-5 & 8.74e-5 &  6  & 5.50e-5 & 9.14e-5 & 6 & 1.04e-4 & 8.78e-5 & 6 \\
& 7 & 6.83e-5 & 8.18e-5  & 7  & 4.15e-5 & 1.02e-4  &  7 & 6.70e-5 & 9.67e-5 & 7 \\
& 8 & 5.81e-5 & 6.00e-5  & 8  & 3.50e-5 & 7.44e-5  & 8  & 6.11e-5 & 7.10e-5 & 8\\
& 9 & 6.51e-5 & 5.67e-5  & 9 & 2.49e-5 & 8.59e-5 &  9 & 7.08e-5 & 5.72e-5 & 9\\
& 10 & 6.70e-5 & 6.94e-5 & 10 & 2.18e-5 & 8.01e-5  & 10  & 5.43e-5 & 7.27e-5 & 10
 \vspace{0.2cm}\\

S\&P & 5 &8.74e-5 & 8.94e-5 & 5  & 4.50e-5 & 1.14e-4 & 5  & 1.02e-4 & 1.14e-4 & 5\\
($N$=98) & 6 & 5.87e-5 & 8.47e-5 & 6 & 3.37e-5 & 1.01e-4  & 6 & 7.93e-5 & 8.88e-5 & 6 \\
& 7 & 3.51e-5 & 7.69e-5 &  7 & 3.36e-5 & 8.93e-5  & 7  & 6.70e-5 & 7.58e-5 & 7 \\
& 8 & 5.50e-5 & 5.75e-5 &  8 & 2.51e-5 & 7.35e-5  & 8 & 6.41e-5 & 6.58e-5  & 8\\
& 9 & 3.71e-5 & 5.09e-5  & 9 & 2.11e-5 & 5.92e-5  & 9 & 5.78e-5 & 6.56e-5  & 9 \\
& 10 & 2.93e-5 & 4.57e-5 & 10 & 1.85e-5 & 5.10e-5  & 10 & 5.22e-5 & 5.07e-5  & 10
\vspace{0.2cm} \\

Nikkei & 5 & 1.34e-4 & 1.32e-4  & 5 & 6.02e-5 & 1.44e-4   & 5 & 1.22e-4 & 1.43e-4& 5\\
($N$=225) & 6 & 9.48e-5 & 9.92e-5  & 6 & 5.13e-5 & 1.20e-4  & 6  & 8.26e-5 & 9.71e-5  & 6\\
& 7 & 7.72e-5 & 9.77e-5  & 7 & 3.93e-5 & 1.11e-4 &   7 & 6.89e-5 & 1.11e-4 &  7\\
& 8 & 9.24e-5 & 8.70e-5  & 8 & 3.12e-5 & 1.18e-4 &  8  & 7.09e-5 & 9.09e-5 &8 \\
& 9 & 4.87e-5 & 7.68e-5  &  9 & 2.78e-5 & 1.18e-4  &  9 & 4.52e-5 & 8.22e-5  & 9\\
& 10 & 6.39e-5 & 6.75e-5  &  10 & 2.36e-5 & 8.25e-5  & 10  & 5.37e-5 & 6.77e-5 & 10\\
\hline
\end{tabular}
}
\end{center}
\end{table}

\begin{table}[ht]\label{biao2}
\caption{\footnotesize{The comparison on small data sets.}}
{\scriptsize \ \centering \renewcommand\arraystretch{1.2} }
\par
\begin{center}
{\scriptsize
\begin{tabular}{ccccccc}
\hline
Index & Density & \multicolumn{1}{c}{$Cons(l_0)$} &
\multicolumn{1}{c}{$Cons(MIP)$} & \multicolumn{1}{c}{$Cons(l_{1/2})$}& $SupO(l_0,MIP)$ &$SupO(l_0,l_{1/2})$ \\ \hline
Hang & 5   & 1.05e-5    & 3.18e-5    & 1.29e-5    & 41.7 & 26.8  \\
Seng & 6   & 8.37e-6 & 2.97e-5 & 1.39e-5 & 55.9 & 52.1   \\
($N$=31) & 7 & 1.46e-5 & 2.13e-5 & 1.86e-5 & 28.8 & 16.4 \\
& 8 & 1.23e-6 & 1.03e-5 & 2.43e-6 & 19.0 & 15.3   \\
& 9 & 1.66e-6 & 8.50e-6 & 1.52e-5 & 22.9 & 11.4  \\
& 10 & \textbf{3.54e-7} & 9.15e-6 & \textbf{8.50e-8} & 44.3 & 33.9
\vspace{0.2cm} \\

DAX & 5 & 6.72e-5 & 7.97e-5 & 6.94e-5   & \textbf{-6.28} & 8.47  \\
($N$=85) & 6 & 6.95e-5 & 7.61e-5 & 7.49e-5 & \textbf{-6.27} & 11.7  \\
& 7 & 7.12e-5 & 8.69e-5 & 7.96e-5 & 4.72 & 7.26  \\
& 8 & \textbf{7.03e-5} & 7.30e-5 & \textbf{6.70e-5} & 0.79 & 6.96   \\
& 9 & \textbf{6.69e-5} & 7.58e-5 & \textbf{6.28e-5} & 4.68 & 15.3   \\
& 10 & 6.23e-5 & 6.94e-5 & 8.11e-5 & \textbf{-4.52} & 21.6
\vspace{0.2cm} \\

FTSE & 5 & 2.11e-5 & 2.95e-5 & 3.40e-5 & 14.6 & 4.27  \\
($N$=89) & 6 & 1.45e-5 & 3.64e-5 & 1.66e-5 & 4.41 & 0.42   \\
& 7 & 1.35e-5 & 6.05e-5 & 2.98e-5 & 19.8 & 15.4  \\
& 8 & 1.85e-6 & 3.94e-5 & 9.95e-6 & 19.3 & 15.5 \\
& 9 & 8.39e-6 & 6.11e-5 & 1.36e-5 & 34.0 & 0.74  \\
& 10 & 2.46e-6 & 5.83e-5 & 1.85e-5 & 13.3 & 4.52
\vspace{0.2cm}\\

S\&P & 5 & 2.10e-6 & 6.93e-5 & 1.17e-5 & 21.7 & 21.3 \\
($N$=98) & 6 & \textbf{2.60e-5} & 6.70e-5 & \textbf{9.48e-6} & 15.9 & 4.66  \\
& 7 & \textbf{4.18e-5} & 5.57e-5 & \textbf{8.80e-6} & 13.9 & \textbf{-1.40}   \\
& 8 & \textbf{2.58e-6} & 4.83e-5 & \textbf{1.70e-6} & 21.7 & 12.6\\
& 9 & \textbf{1.38e-5} & 3.81e-5 & \textbf{7.81e-6} & 14.0 & 22.4   \\
& 10 & \textbf{1.64e-5} & 3.25e-5 & \textbf{1.49e-6} & 10.4 & 9.96
\vspace{0.2cm} \\

Nikkei & 5 & 2.10e-6 & 8.39e-5 & 2.14e-5 & 8.28 & 7.81 \\
($N$=225) & 6 & 4.38e-6 & 6.83e-5 & 1.46e-5 & 17.0 & \textbf{-2.11} \\
& 7 & 2.05e-5 & 7.16e-5 & 4.19e-5 & 11.9 & 11.8  \\
& 8 & 5.40e-6 & 8.64e-5 & 2.00e-5 & 26.1 & 4.29   \\
& 9 & 2.81e-5 & 8.98e-5 & 3.70e-5 & 34.8 & 6.60  \\
& 10 & 3.60e-6 & 5.89e-5 & 1.39e-5 & 18.1 & 0.23  \\
\hline
\end{tabular}
}
\end{center}
\end{table}


\subsection{Results on large data sets}\label{test2}
In this subsection, we compare the performance of the $l_0$-based method (i.e., NPG method) with
the MIP- and $l_{1/2}$-based methods (namely, hybrid evolutionary and half thresholding algorithms)
on four large data sets, which are Standard and Poor's 100,  Russell 2000, Russell 3000 and the
Chinese index CSI 300.  For a fair comparison of the performance of these methods, we tailor
their model parameters so that the resulting portfolios have same density (i.e., same number of
nonzero entries).

\begin{table}[ht]
\caption{\footnotesize{The in-sample and out-of-sample tracking errors on large data sets.}}
\label{biao3}{\scriptsize \ \centering \renewcommand\arraystretch{1.2} }
\par
\begin{center}
{\scriptsize
\begin{tabular}{ccccccccccc}
\hline
Index & Density & \multicolumn{3}{c}{$l_0$} &
\multicolumn{3}{c}{MIP} & \multicolumn{3}{c}{$l_{1/2}$} \\
& & $TEI$ & $TEO$ &$S_{true}$ & $TEI$ & $TEO$ &$S_{true}$ &$TEI$ & $TEO$ &$S_{true}$ \\ \hline
          & 5   &3.34e-5 &2.19e-5  &  5  &1.21e-5  & 2.43e-5  & 5     & 2.39e-5 & 1.99e-5  & 5 \\
CSI 300   & 6   &2.34e-5 &2.11e-5  &  6 &1.17e-5  & 2.37e-5   & 6   & 1.91e-5 & 2.11e-5  &6 \\
($N$=300) & 7   &1.86e-5 &1.98e-5  &  7 &7.84e-6  &2.36e-5    & 7   & 1.51e-5 &2.09e-5   &7 \\
          & 8   &1.67e-5 &1.68e-5  &  8 &7.68e-6  & 2.04e-5   & 8   & 1.42e-5 &1.92e-5   &8 \\
          & 9   &1.67e-5 &1.54e-5  &  9 &7.23e-6  &1.85e-5    & 9  & 1.26e-5 &1.63e-5 & 9\\
          & 10  &1.13e-5 &1.21e-5  &  10 &6.42e-6  &1.51e-5   & 10    & 1.32e-5 &1.33e-5&10\\
          & 20  &6.29e-6 &7.29e-6  &  20 &2.92e-6  &7.65e-6   & 20    & 6.40e-6 &7.64e-6 & 20\\
          & 30  &3.72e-6 &5.14e-6  &  30 &2.07e-6  &5.20e-6   & 30    & 4.15e-6 &5.55e-6 & 30\\
          & 40  &2.39e-6 &4.17e-6  &  40 &1.58e-6  &7.63e-6   & 40    & 3.05e-6 &5.30e-5 & 40\\
          & 50  &2.87e-6 &3.28e-6  &  50 &1.90e-6  &5.00e-6   & 50     & 2.03e-6 &4.53e-6 & ~50
\vspace{0.2cm}\\

         & 80     &2.85e-6  &7.82e-5  & 80   &2.65e-6 &9.98e-5   & 80    & 1.37e-5 &9.85e-5  & 80  \\
     S\&P & 90    &2.43e-6  &7.52e-5  & 90 &3.01e-6 & 1.24e-4    & 90  & 1.08e-5 &9.98e-5  & 90\\
($N$=457) & 100   &2.13e-6  &7.39e-5  & 100 &2.50e-6 & 9.69e-5   &  100  & 9.08e-6 & 1.04e-4 & 100  \\
          & 120   &1.66e-6  &7.59e-5  & 120 & 2.58e-5 & 1.04e-4  & 120   & 6.42e-6 & 9.35e-5  &120 \\
          & 150   &1.52e-6  &7.95e-5  & 150 &5.64e-6  & 1.25e-4  & 150 & 5.18e-6 & 1.07e-4  & 150\\
          & 200   &1.57e-6  &7.94e-5  & 200  & 2.13e-6 & 9.80e-5 & 200    & 2.72e-6 & 9.09e-5 &~200
\vspace{0.2cm}\\

           & 80     &4.02e-6 &2.07e-4 &  80    &3.62e-6 &2.89e-4   & 80  &2.92e-5 &2.34e-4  & 80  \\
Russell 2000 & 90   &3.51e-6 &2.08e-4 &  90  &4.95e-6 &2.76e-4     & 90 &2.76e-5 &2.45e-4 & 90\\
($N$=1318) & 100    &3.18e-6 &1.70e-4 &  100  &2.61e-6 &2.60e-4    & 100  &2.09e-5 &2.13e-4  & 100  \\
          & 120     &2.32e-6 &1.68e-4 &  120  &2.80e-6 &2.49e-4    & 120  &1.71e-5 &2.61e-4  & 120\\
          & 150     &1.99e-6 &1.94e-4 &  150  &1.16e-5 &2.68e-4    & 150  &1.20e-5 &2.66e-4  & 150 \\
          & 200     &9.83e-7 &2.28e-4 &  200   &1.42e-6 &3.31e-4   & 200    &6.89e-6 &3.18e-4 &~200
\vspace{0.2cm}\\

            & 80   &6.24e-6  &1.34e-4  & 80 &3.90e-6 &1.70e-4  & 80 &2.62e-5 & 1.64e-4  &  80 \\
Russell 3000 & 90  &5.49e-6  &1.14e-4  & 90 &3.33e-6 &1.21e-4  & 90 &1.99e-5 & 1.47e-4  & 90\\
($N$=2151) & 100   &4.10e-6  &1.05e-4  & 100 &3.48e-6 &1.05e-4  &100  &1.87e-5 & 1.37e-4  & 100 \\
          & 120    &2.78e-6  &9.82e-5  & 120 &3.01e-6 &1.06e-4  & 120 &1.66e-5 & 1.26e-4  & 120\\
          & 150    &1.63e-6  &1.00e-4  & 150 &2.48e-6 &1.10e-4  & 150  &1.46e-5 & 1.23e-4  & 150\\
          & 200    &1.41e-6  &1.06e-4  & 200 &3.22e-6 &1.09e-4  & 200 &1.03e-5 & 1.57e-4  &200\\
\hline
\end{tabular}
}
\end{center}
\end{table}

\begin{table}[ht]\label{biao4}
\caption{The comparison on large data sets.}
{\scriptsize \ \centering \renewcommand\arraystretch{1.2} }
\par
\begin{center}
{\scriptsize
\begin{tabular}{cccccccccc}
\hline
Index & Density & \multicolumn{3}{c}{Time} & \multicolumn{1}{c}{$Cons(l_0)$} &
\multicolumn{1}{c}{$Cons(MIP)$} & \multicolumn{1}{c}{$Cons(l_{1/2})$} &  $SupO$ &$SupO$ \\
& &$l_0$  & MIP & $l_{1/2}$ & &&& $(l_0,MIP)$ & $(l_0,l_{1/2})$  \\ \hline
          & 5  &0.0114  &26.7  &1.96  &4.05e-6&1.22e-5&5.65e-6   & 18.2   &\textbf{-5.24}    \\
CSI 300   & 6  &0.0113  &36.0  &2.17  &\textbf{2.32e-6}&1.20e-5&\textbf{1.96e-6}   &10.9  &\textbf{-0.08}  \\
($N$=300) & 7  &0.0039  &42.1  &2.31  &1.22e-6&1.58e-5&5.83e-6   &16.3  &5.41    \\
          & 8  &0.0097  &13.5  &2.18  &1.91e-7&1.27e-5&4.94e-6   &17.3  &12.1    \\
          & 9  &0.0078  &17.1  &2.50  &1.35e-6&1.12e-5&3.66e-6   &16.7  &5.44  \\
          & 10 &0.0053  &14.6  &2.71  &\textbf{7.95e-7}&8.67e-6&\textbf{1.28e-7}   &19.7  &8.72    \\
          & 20 &0.0078  &2.84  &4.30  &1.00e-6&4.73e-6&1.23e-6   &4.65  &4.49  \\
          & 30 &0.0060  &1.97  &6.47  &\textbf{1.42e-6}&3.13e-6&\textbf{1.40e-6 }  &1.21  &7.43    \\
          & 40 &0.0064  &2.20  &6.85  &1.78e-7&6.05e-6&2.25e-6   &45.4  &21.4  \\
          & 50 &0.0083  &1.76  &7.65  &4.10e-7&3.11e-6&2.50e-6   &34.5  &27.8
 \vspace{0.2cm}\\

          & 80   &0.0271  &63.6   &8.64  &7.53e-5&9.72e-5&8.48e-5   &21.7  &20.7     \\
     S\&P & 90   &0.0207  &49.0   &10.2  &7.28e-5&1.21e-4&8.90e-5  &39.1  &24.6 \\
($N$=457) & 100  &0.0199  &77.0   &15.3  &7.17e-5&9.44e-5&9.47e-5   &23.7  &28.8   \\
          & 120  &0.0187  &86.9   &13.3  &7.42e-5&1.02e-4&8.71e-5   &27.3  &18.8    \\
          & 150  &0.0184  &58.7   &13.5  &7.80e-5&1.20e-4&1.01e-4   &36.6  &25.4 \\
          & 200  &0.0197  &689.3  &13.7  &7.78e-5&9.58e-5&8.82e-5    &19.0  &12.7
 \vspace{0.2cm}\\

             & 80   &0.153  &577.7   &35.7  &2.03e-4&2.85e-4&2.05e-4   &28.3  &11.6    \\
Russell 2000 & 90   &0.137  &352.6   &27.5  &2.04e-4&2.71e-4&2.17e-4   &24.7  &15.0  \\
($N$=1318)   & 100  &0.148  &657.8   &38.4  &1.67e-4&2.58e-4&1.92e-4   &34.6  &20.1   \\
             & 120  &0.149  &449.1   &47.2  &1.65e-4&2.46e-4&2.44e-4   &32.6  &35.6   \\
             & 150  &0.113  &50.6    &56.5  &1.92e-4&2.56e-4&2.54e-4   &27.6  &27.3   \\
             & 200  &0.095  &1352.7  &46.4  &2.27e-4&3.29e-4&3.11e-4   &30.9  &28.2
 \vspace{0.2cm}\\

             & 80   &0.626  &861.1    &37.1  &1.28e-4&1.66e-4&1.38e-4 &21.0  &18.6     \\
Russell 3000 & 90   &0.267  &1039.5   &47.9  &1.08e-4&1.18e-4&1.27e-4 &6.00  &22.3   \\
($N$=2151)   & 100  &0.269  &913.1    &48.5  &1.01e-4&1.02e-4&1.19e-4 &0.05  &23.5    \\
             & 120  &0.248  &658.7    &88.0  &9.54e-5&1.03e-4&1.09e-4 &7.26  &21.8  \\
             & 150  &0.216  &878.7    &74.9  &9.83e-5&1.08e-4&1.09e-4 &9.34  &18.9    \\
             & 200  &0.342  &1999.9   &97.9  &1.05e-4&1.05e-4&1.47e-4 &2.30  &32.4 \\
\hline
\end{tabular}
}
\end{center}
\end{table}

Numerical results are reported in Tables 3 and 4, where $N$ denotes the number of assets
in a data set.  In particular,  we present in Table 3  in-sample
error and out-of sample error of the portfolios generated by the above three methods. In
Table 4, we present the CPU time of these methods and superiority of out-of-sample errors of
the portfolios given by these methods. The number of nonzero portfolios given by these methods is
listed in the column named ``density''. We can have the following observations from Table 4.


\bi
\item[(i)]
The $l_0$-based method (i.e., NPG method) generally has higher consistency between in-sample error
and out-of-sample error than the MIP- and $l_{1/2}$-based methods (namely, hybrid evolutionary and
half thresholding algorithms) since $Cons(l_0)<Cons(MIP)$ holds for 100\% (28/28)
instances and $Cons(l_0)<Cons(l_{1/2})$ holds for 89.3\% (25/28) instances.
\item[(ii)]
The $l_0$-based method is generally superior to the MIP- and $l_{1/2}$-based methods in terms of
out-of-sample error since $SupO(l_0,MIP)>0$ holds for all instances and $SupO(l_0,l_{1/2})>0$ holds for 92.9\% (26/28) instances.
\item[(iii)]
The $l_0$-based method also generally outperforms the MIP- and $l_{1/2}$-based methods
in terms of speed.
\ei

\section{Concluding remarks} \label{conclude}

In this paper we proposed an index tracking model with budget, no-short selling and a cardinality constraint. Also,  we developed  an efficient nonmonotone projected gradient  (NPG) method for solving
this model. At each iteration, this method usually solves several projected gradient subproblems. We
showed that each subproblem has a closed-form solution, which can be computed in linear time.
Under some suitable assumptions, we showed that any accumulation point of the sequence
generated by the NPG method  is a local minimizer of the cardinality-constrained index tracking problem. We also conducted empirical tests on the data sets from OR-library \cite{OR_library} and
the CSI 300 index from China Shanghai-Shenzhen stock market to compare our method with the
hybrid evolutionary algorithm \cite{Evolutionary} and the hybrid half thresholding algorithm
\cite{L1/2} for index tracking. The computational results demonstrate that our approach
generally produces sparse portfolios with smaller out-of-sample tracking error and higher
consistency between in-sample and out-of-sample tracking errors. Moreover, our method
outperforms the other two approaches in terms of speed.

We shall mention that the proposed NPG method in this paper can be used to solve the subproblems arising
in the penalty method or augmented Lagrangian method when applied to solve more general problem
\[
\ba{ll}
\min\limits_{x \in \cFr} & f(x) \\
\mbox{s.t.} & g(x) \le 0, \ h(x) = 0
\ea
\]
for some $g:\Re^n \to \Re^p$ and $h:\Re^n \to \Re^q$,  where $\cFr$ is given in \eqref{cFr}.

\section*{Acknowledgment}

The authors would like to thank the two anonymous referees for their constructive comments which
substantially improved the presentation of the paper.


\end{document}